\documentclass{article}
\usepackage[margin=1in]{geometry}
\usepackage{mathtools}
\usepackage{amssymb}
\usepackage{amsthm}
\usepackage{dsfont}
\usepackage[unicode, psdextra]{hyperref}
\usepackage{comment}
\hypersetup{
    colorlinks=true,
    linkcolor=blue,
    citecolor=blue,
    urlcolor=blue
}
\usepackage{authblk}

\theoremstyle{plain}
\newtheorem{theorem}{Theorem}

\newtheorem{corollary}[theorem]{Corollary}

\theoremstyle{definition}

\newtheorem{definition}[theorem]{Definition}
\newtheorem{remark}[theorem]{Remark}

\DeclareMathOperator{\sgn}{sgn}

\newcommand{\E}{\mathbb{E}}
\newcommand{\R}{\mathbb{R}}

\DeclarePairedDelimiter{\norm}{\lVert}{\rVert}
\newcommand{\opnorm}[2][]{\norm[#1]{#2}_{\text{op}}}

\DeclarePairedDelimiterX{\inner}[2]{\langle}{\rangle}{#1, #2}

\title{\textbf{On the Subgaussianity of Quantized Linear Maps: \\
An AI-Assisted Note}}

\author[1]{Guangyi Zou}
\author[1]{Roman Vershynin}

\affil[1]{Department of Mathematics, University of California, Irvine
\protect\\ \small \texttt{gzou3@uci.edu}, \texttt{rvershyn@uci.edu}}

\date{\today}

\begin{document}

\maketitle

\begin{abstract}
We prove an elementary bounded-differences inequality for functions of non-isotropic Gaussian vectors. Specifically, if $f$ has bounded coordinate differences and $X\sim\mathcal N(\mu,\Sigma)$, then the resulting concentration bound depends on the condition number $\kappa(\Sigma)$. As an application, we answer a question of Simone Bombari concerning the subgaussianity of sign-quantized linear maps $Y=\sgn(Wx)$.

In the special case where $f$ is the coordinatewise sign function, an argument was initially suggested to us by Gemini 3.5 Flash without attribution. We subsequently discovered that it closely resembles an earlier argument of Barber and Kolar~\cite[Lemma~4.5]{MR3852657}. This revision corrects the attribution and documents the episode as an instance of AI-assisted mathematical discovery.
\end{abstract}

\vspace{1em}

\section{Introduction: An AI-for-Math Perspective}

Simone Bombari asked us whether the $1$-bit quantized random vector
$Y = \sgn(Wx)$ has subgaussian norm bounded by a universal constant. Here
$W$ is an $n \times n$ random Gaussian matrix, and $x$ is an independent
standard normal random vector in $\mathbb{R}^n$. The question is nontrivial
since the coordinates of $Y$ are not independent. We give a positive answer
and, more generally, prove a bounded-differences inequality for a
non-isotropic Gaussian input.

\begin{itemize}
    \item \textbf{AI (re)discovery
    (Theorem~\ref{thm:bounded_differences}):}
    During our initial exploration, Gemini 3.5 Flash\footnote{The original
    Chinese dialogue is archived at
    \url{https://gemini.google.com/share/daca36d5cfbc}; see
    \url{https://github.com/The8ravo/conversation-history-with-Gemini/blob/main/conversation_history_with_gemini.pdf}
    for an AI-generated English translation with a timeline. The reproduced
    interactive dialogue and full derivations are archived at
    \url{https://gemini.google.com/share/cefbb71da6d2}.}
    proposed decomposing the Gaussian vector into independent parts, using one part to smooth the sign function, and then applying Gaussian concentration. We later learned this idea had already appeared in Barber and Kolar~\cite[Lemma~4.5]{MR3852657}. Gemini later returned essentially their same argument without citing the source. The present revision corrects this attribution. The argument extends to general bounded-differences
    functions, as stated below.

    \item \textbf{Human design (Section~\ref{sec:application}):}
    To apply this result to square random matrices $W$, which are typically
    ill conditioned, the authors used a row-partitioning trick. This breaks
    the square matrix into two well-conditioned rectangular blocks.
\end{itemize}

\section{Main Result: Gaussian Bounded Differences}

The subgaussian norm of a random vector is defined as the largest subgaussian
norm of its $1$-dimensional marginals:
\[
\|Y\|_{\psi_2}
= \sup_{v \in S^{n-1}} \|\langle v, Y \rangle\|_{\psi_2},
\]
see~\cite[Definition 3.4.1]{Vershynin_2026}. The condition number of a
positive-definite matrix $\Sigma$ is the ratio of its largest eigenvalue to its smallest eigenvalue:
\[
\kappa(\Sigma)
\coloneqq \frac{\lambda_{\max}(\Sigma)}{\lambda_{\min}(\Sigma)}.
\]

\begin{definition}[Bounded differences]
A measurable function $f:\R^n\to\R$ has bounded-differences constants
$b_1,\ldots,b_n\ge 0$ if, for every $i=1,\ldots,n$, changing only the $i$-th coordinate changes $f$ by at most $b_i$:
\[
|f(x)-f(x')|\le b_i
\quad \text{whenever} \quad x_j=x'_j \text{ for all } j\ne i.
\]
Write $b=(b_1,\ldots,b_n)$.
\end{definition}

\begin{theorem}[Gaussian bounded-differences inequality]
\label{thm:bounded_differences}
Let $X\sim\mathcal N(\mu,\Sigma)$ be an $n$-dimensional Gaussian vector with non-singular covariance matrix $\Sigma$. Let $f:\R^n\to\R$ have
bounded-differences constants $b=(b_1,\ldots,b_n)$. Then,
\[
\|f(X) - \E f(X)\|_{\psi_2} \le C \sqrt{\kappa(\Sigma)}\|b\|_2,
\]
where $C>0$ is an absolute constant.
\end{theorem}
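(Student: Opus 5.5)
Decompose $X$ into two independent Gaussian pieces, one of them isotropic, so that the independent-coordinate bounded-differences inequality and Gaussian concentration can each be applied to one piece. Write $\lambda=\lambda_{\min}(\Sigma)$ and split
\[
X = \mu + \sqrt{\lambda/2}\,G + Z,\qquad G\sim\mathcal N(0,I_n),\quad Z\sim\mathcal N(0,\Sigma-\tfrac{\lambda}{2}I_n),
\]
with $G,Z$ independent. This is legitimate because $\Sigma-\frac{\lambda}{2}I\succeq \frac{\lambda}{2}I\succ0$. Define the smoothed function
\[
g(z)\coloneqq \E_G f\bigl(\mu+\sqrt{\lambda/2}\,G+z\bigr).
\]
Then
\[
f(X)-\E f(X) = \bigl[f(X)-g(Z)\bigr] + \bigl[g(Z)-\E g(Z)\bigr].
\]
It suffices to bound the $\psi_2$ norm of each bracket by $C\sqrt{\kappa}\,\|b\|_2$ and combine them with the triangle inequality.

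\textbf{First term (conditioning on $Z$).} Conditionally on $Z=z$, the map $G\mapsto f(\mu+\sqrt{\lambda/2}\,G+z)$ is a function of the independent coordinates $G_1,\dots,G_n$. It has bounded differences $b_i$, since changing $G_i$ changes only the $i$-th coordinate of the argument. McDiarmid's inequality gives a conditional subgaussian tail with constant $\lesssim\|b\|_2$, uniformly in $z$. Integrating over $Z$ yields $\|f(X)-g(Z)\|_{\psi_2}\le C\|b\|_2$. This holds without any factor $\kappa$, and also without any use of $\lambda$.

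\textbf{Second term (Lipschitz smoothing).} I will show that $g$ is Lipschitz with constant $\lesssim \|b\|_2/\sqrt{\lambda}$ and then apply Gaussian concentration to $Z$. Write $Z=A\xi$ with $\xi\sim\mathcal N(0,I)$ and $\|A\|_{\mathrm{op}}\le\sqrt{\lambda_{\max}(\Sigma)}$. Then $\xi\mapsto g(A\xi)$ is Lipschitz with constant $\lesssim \sqrt{\lambda_{\max}/\lambda}\,\|b\|_2=\sqrt{\kappa}\,\|b\|_2$. Gaussian concentration for Lipschitz functions then gives the desired $\psi_2$ bound.

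\textbf{Main obstacle: the Lipschitz estimate for $g$.} Fix $z,z'$ and let $h=z-z'$, $s=\sqrt{\lambda/2}$. Walk coordinate by coordinate through the hybrids $z^{(k)}=(z'_1,\dots,z'_k,z_{k+1},\dots,z_n)$. Each step shifts a single coordinate by $h_k$. For a single-coordinate shift, condition on $G_j$ for $j\ne k$; the difference then becomes
\[
\E\bigl[\phi(G_k+h_k/s)-\phi(G_k)\bigr],
\]
where $\phi$ is a one-variable function with oscillation at most $b_k$. Replace $\phi$ by $\phi$ minus its midrange, so that $|\phi|\le b_k/2$. Then this difference is at most
\[
\tfrac{b_k}{2}\,\mathrm{TV}\bigl(\mathcal N(h_k/s,1),\mathcal N(0,1)\bigr)\le C\,b_k|h_k|/s.
\]
Summing over $k$ and applying Cauchy--Schwarz gives
\[
|g(z)-g(z')|\le C s^{-1}\sum_k b_k|h_k|\le C\lambda^{-1/2}\|b\|_2\|h\|_2.
\]
I expect this coordinatewise total-variation step to be the crux. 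The point is that it uses only bounded differences, with no continuity of $f$ (which matters for $f=\sgn$), and that it produces the $\ell_2$ norm $\|b\|_2$ rather than $\|b\|_1$. Measurability of $g$ and the Fubini arguments are routine.
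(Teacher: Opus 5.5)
Your proof is correct and takes essentially the same approach as the paper. Both split off an isotropic Gaussian component, apply the independent-coordinate bounded-differences inequality conditionally, show that the smoothed mean is $O(\|b\|_2/\sqrt{\lambda_{\min}(\Sigma)})$-Lipschitz via midrange subtraction, and finish with Gaussian concentration. The differences are minor: you get the Lipschitz bound from a coordinatewise hybrid/total-variation estimate instead of Gaussian integration by parts on $\partial_i H$, you split off $\lambda/2$ instead of $\lambda_{\min}$, and you combine the two pieces with the $\psi_2$ triangle inequality instead of moment generating functions, which only changes the constant.
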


For comparison, if $X$ is any random vector with independent coordinates, the classical
bounded-differences inequality gives
\[
\|f(X)-\E f(X)\|_{\psi_2}\le C\|b\|_2,
\]
where $C>0$ is an absolute constant. Thus,
Theorem~\ref{thm:bounded_differences} extends this conclusion to dependent
Gaussian coordinates at the cost of the factor $\sqrt{\kappa(\Sigma)}$.

\begin{proof}
The proof extends the argument of Barber and Kolar~\cite[Lemma~4.5]{MR3852657}. Replacing $f$ by $f-f(0)$ does not change
either its bounded-differences constants or the centered random variable $f(X)-\E f(X)$. After this replacement, a coordinate-by-coordinate telescoping argument gives $|f(x)|\le\sum_i b_i$ for every $x\in\R^n$.
Thus $f$ is bounded, which justifies the Gaussian convolution and differentiation used below.

\medskip
\noindent\emph{Step 1: Covariance splitting.}
Let $\alpha=\lambda_{\min}(\Sigma)$ and $\beta=\lambda_{\max}(\Sigma)$. Define
$\Sigma_G=\Sigma-\alpha I_n\ge 0$. We may write
\[
X=\mu+\sqrt \alpha\,Z+G,
\]
where $Z\sim\mathcal N(0,I_n)$ and
$G\sim\mathcal N(0,\Sigma_G)$ are independent.

\medskip
\noindent\emph{Step 2: Conditional bounded-differences estimate.}
Define the function
\[
H(g)=\E_Z f(\mu+\sqrt \alpha\,Z+g), \quad g \in \R^n.
\]
Conditionally on $G$, the coordinates of $Z$ are independent, and changing
$Z_i$ changes only the $i$-th coordinate of the input of $f$. Thus the map
$z\mapsto f(\mu+\sqrt\alpha\,z+G)$ has bounded-differences constants
$b_1,\ldots,b_n$. By the classical bounded-differences inequality
(see, e.g.,~\cite[Theorem~5.7.1]{Vershynin_2026}), conditionally on $G$,
\[
\big\|f(\mu+\sqrt\alpha\,Z+G)-H(G)\big\|_{\psi_2\mid G}
\le C\|b\|_2,
\]
where $C>0$ is an absolute constant. More precisely, its
exponential-moment form gives, for every $\lambda\in\R$,
\begin{equation}\label{eq:conditional_bounded_differences}
\E\!\left[
\exp\!\left(\lambda
\big(f(\mu+\sqrt \alpha\,Z+G)-H(G)\big)\right)
\,\middle|\,G
\right]
\le \exp\!\left(\frac{\lambda^2\|b\|_2^2}{8}\right).
\end{equation}

\medskip
\noindent\emph{Step 3: Concentration of the conditional mean.}
Gaussian convolution makes $H$ smooth: Gaussian integration by parts
(see, e.g.,~\cite[Lemma~7.2.3]{Vershynin_2026}) gives
\begin{equation}
    \label{eq:partial}
    \partial_iH(g) = \frac{1}{\sqrt \alpha}\E_Z\!\left[Z_i
f(\mu+\sqrt \alpha\,Z+g)\right].
\end{equation}
Conditionally on $Z_j$ for all $j\ne i$, the range of
$f(\mu+\sqrt \alpha\,Z+g)$ as a function of $Z_i$ has length at most $b_i$.
Subtracting the midpoint of this range does not change the expectation in
\eqref{eq:partial}. This gives
\[
|\partial_iH(g)|
\le \frac{b_i}{2\sqrt \alpha}\E|Z_i|
=\frac{b_i}{\sqrt{2\pi \alpha}}.
\]
Thus $\|\nabla H(g)\|_2\le\|b\|_2/\sqrt{2\pi\alpha}$, so $H$ is a
$\|b\|_2/\sqrt{2\pi\alpha}$-Lipschitz function.

Let $U\sim\mathcal N(0,I_n)$. Since
$G\stackrel{d}{=}\Sigma_G^{1/2}U$, define
\[
F(u)=H(\Sigma_G^{1/2}u).
\]
The Lipschitz constant of $F$ satisfies
\[
\operatorname{Lip}(F)\le \opnorm{\Sigma_G^{1/2}}\operatorname{Lip}(H)\le\sqrt{\opnorm{\Sigma_G}}\,
\frac{\|b\|_2}{\sqrt{2\pi\alpha}}=\sqrt{\beta-\alpha}\,
\frac{\|b\|_2}{\sqrt{2\pi\alpha}}.
\]
Since $\beta/\alpha=\kappa(\Sigma)$, it follows that
\[
\operatorname{Lip}(F)^2
\le \frac{\kappa(\Sigma)-1}{2\pi}\|b\|_2^2.
\]
Standard Gaussian concentration applied to $F(U)$
(see, e.g.,~\cite[Theorem~5.2.3]{Vershynin_2026} or
\cite[Theorem~5.5]{MR3185193}) therefore gives
\begin{align}
\E\exp\!\left(\lambda(H(G)-\E H(G))\right)
&=\E\exp\!\left(\lambda(F(U)-\E F(U))\right)\notag\\
&\le \exp\!\left(\frac{\lambda^2\operatorname{Lip}(F)^2}{2}\right)\notag\\
&\le
\exp\!\left(
\frac{\lambda^2\|b\|_2^2}{4\pi}
(\kappa(\Sigma)-1)
\right).
\label{eq:mean_concentration}
\end{align}

\medskip
\noindent\emph{Step 4: Combining the bounds.}
Conditioning on $G$ and using
\eqref{eq:conditional_bounded_differences}--\eqref{eq:mean_concentration}, we obtain
\[
\log\E\exp\!\left(\lambda(f(X)-\E f(X))\right)\le \lambda^2\|b\|_2^2
\left(\frac18+\frac{\kappa(\Sigma)-1}{4\pi}\right)\le \frac{\lambda^2\kappa(\Sigma)\|b\|_2^2}{8}.
\]
By the standard equivalence of subgaussian properties
(see, e.g.,~\cite[Proposition~2.6.1]{Vershynin_2026}), this implies
\[
\|f(X) - \E f(X)\|_{\psi_2}
\le C \sqrt{\kappa(\Sigma)}\|b\|_2,
\]
where $C>0$ is an absolute constant.
\end{proof}

The coordinatewise result from the first version is an immediate
consequence.

\begin{corollary}\label{thm:bounded_spectral}
Let $X\sim\mathcal N(\mu,\Sigma)$ be an $n$-dimensional Gaussian vector with non-singular covariance matrix $\Sigma$. Let $\phi:\R\to\R$ be a measurable function with $\|\phi\|_\infty\le 1$, and define
$Y=\phi(X)$ coordinatewise. Then
\[
\|Y-\E Y\|_{\psi_2}\le C\sqrt{\kappa(\Sigma)},
\]
where $C>0$ is an absolute constant.
\end{corollary}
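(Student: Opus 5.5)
The plan is to reduce the vector statement to the scalar Theorem~\ref{thm:bounded_differences}, one marginal at a time. By the definition of the subgaussian norm of a random vector, it suffices to show that for every fixed unit vector $v\in S^{n-1}$,
\[
\|\inner{v}{Y}-\E\inner{v}{Y}\|_{\psi_2}\le C\sqrt{\kappa(\Sigma)},
\]
with $C$ independent of $v$. Taking the supremum over $v$ then gives the claim.

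Fix $v$ and define $f_v:\R^n\to\R$ by
\[
f_v(x)=\sum_{i=1}^n v_i\,\phi(x_i).
\]
This function is measurable because $\phi$ is, and $\inner{v}{Y}=f_v(X)$. Changing only the $i$-th coordinate of $x$ changes only the term $v_i\phi(x_i)$. Since $\|\phi\|_\infty\le 1$, we have $|\phi(x_i)-\phi(x_i')|\le 2$, so $f_v$ has bounded-differences constants $b_i=2|v_i|$. Hence $\|b\|_2=2\|v\|_2=2$.

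Applying Theorem~\ref{thm:bounded_differences} to $f_v$ and $X\sim\mathcal N(\mu,\Sigma)$ gives
\[
\|f_v(X)-\E f_v(X)\|_{\psi_2}\le 2C\sqrt{\kappa(\Sigma)},
\]
where $C$ is the absolute constant from the theorem and does not depend on $v$. Linearity of expectation gives $\E f_v(X)=\inner{v}{\E Y}$, so the left-hand side is exactly the $\psi_2$ norm of the centered marginal $\inner{v}{Y-\E Y}$. Taking the supremum over $v$ finishes the proof, with constant $2C$.

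There is no real obstacle here. The only points to check are that the bounded-differences constants come out proportional to $|v_i|$, so that $\|b\|_2$ is bounded uniformly over the sphere, and that centering commutes with taking marginals. Both are immediate.
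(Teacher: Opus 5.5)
Your proposal is correct and follows essentially the same route as the paper. You apply Theorem~\ref{thm:bounded_differences} to $f_v(x)=\sum_i v_i\phi(x_i)$, whose bounded-differences constants $b_i\le 2|v_i|$ give $\|b\|_2\le 2$ uniformly over $v\in S^{n-1}$, and then take the supremum over $v$, absorbing the factor $2$ into $C$.
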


This follows immediately: for $v\in S^{n-1}$, the function
$f_v(x)=\sum_i v_i\phi(x_i)$ has bounded-differences constants
$b_i\le2|v_i|$, and hence $\|b\|_2\le2$. Theorem~\ref{thm:bounded_differences}
and the definition of the subgaussian norm of a random vector give the claim
after taking the supremum over $v$ and absorbing the factor $2$ into $C$.

\begin{remark}[Extension to Continuous Functions]
If $\phi$ is $\gamma$-H\"older continuous instead of globally bounded, the
same decomposing trick still works, and the subgaussian norm becomes bounded
by
\[
O\!\left(\sqrt{\lambda_{\max}(\Sigma)}\,
\lambda_{\min}(\Sigma)^{\frac{\gamma-1}{2}}\right).
\]
See the
\href{https://gemini.google.com/share/cefbb71da6d2}{conversation history}
for the derivation.

Furthermore, the well-conditioning of $\Sigma$ is necessary for a uniform bound. A natural counterexample is the rank-one matrix $\Sigma=\mathbf 1\mathbf 1^\top$, where $\mathbf 1=(1,\ldots,1)^\top\in\R^n$, which yields $\|\sgn(X)\|_{\psi_2}\ge c\sqrt n$ for an absolute constant $c>0$.
\end{remark}

\begin{remark}[Simultaneous work of Roth]
Simultaneously and independently of our work,
Roth~\cite{roth2026mcdiarmid} obtained a bounded-differences inequality under dependence by a different method. His result applies to distributions more general than Gaussian distributions, including strongly log-concave and log-smooth measures. Our argument is restricted to Gaussian inputs, but it is
elementary and short.
\end{remark}

\section{Application: Sign-Quantized Square Linear Maps}
\label{sec:application}

We now apply Corollary~\ref{thm:bounded_spectral} to resolve the question posed by Simone Bombari.

\medskip
\begin{corollary}\label{cor:bombari_resolution}
Let $W\in\R^{n\times n}$ be a random matrix with i.i.d. standard normal
entries $W_{ij}\sim\mathcal N(0,1)$. Let $x\sim\mathcal N(0,I_n)$ be an
$n$-dimensional standard normal random vector. Define
$Y=\sgn(Wx)\in\{-1,1\}^n$ by applying the sign function to each coordinate
of $Wx$. Then
\[
\E\norm{Y}_{\psi_2\mid W}\le C,
\]
where $C>0$ is an absolute constant. Here, the conditional norm is with
respect to $x$ for fixed $W$, and the expectation is with respect to $W$.
\end{corollary}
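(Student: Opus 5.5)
Fix $W$. Conditionally on $W$, the vector $Wx$ is Gaussian, $Wx\sim\mathcal N(0,WW^\top)$. Since $\sgn$ is odd and $Wx$ is symmetric, $\E[Y\mid W]=0$. A square Gaussian matrix has $\sigma_{\min}(W)\approx n^{-1/2}$, so $\kappa(WW^\top)$ is of order $n^2$, and Corollary~\ref{thm:bounded_spectral} cannot be applied to the full vector directly. Instead I will partition the rows. Let $W_1$ be the top $m=\lfloor n/2\rfloor$ rows and $W_2$ the remaining $n-m$ rows, and set $Y^{(k)}=\sgn(W_kx)$. For $v\in S^{n-1}$, write $v=(v^{(1)},v^{(2)})$. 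Then $\inner{v}{Y}=\inner{v^{(1)}}{Y^{(1)}}+\inner{v^{(2)}}{Y^{(2)}}$. By the triangle inequality for $\psi_2$ (no independence needed) and $\norm{v^{(k)}}_2\le1$, we get
\[
\norm{Y}_{\psi_2\mid W}\le \norm{Y^{(1)}}_{\psi_2\mid W}+\norm{Y^{(2)}}_{\psi_2\mid W}.
\]

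Each block $W_kx\sim\mathcal N(0,W_kW_k^\top)$ is a wide Gaussian matrix of aspect ratio about $1/2$. Its covariance is nonsingular almost surely, so Corollary~\ref{thm:bounded_spectral} with $\phi=\sgn$ gives
\[
\norm{Y^{(k)}}_{\psi_2\mid W}\le C\sqrt{\kappa(W_kW_k^\top)}=C\,\frac{s_{\max}(W_k)}{s_{\min}(W_k)}.
\]
The first inequality uses $\|\sgn\|_\infty\le1$. The equality uses that the singular values of the $d\times n$ matrix $W_k$ with $d\le n/2+1$ are those of $W_k^\top$, whose $s_{\min}$ is the smallest of the $d$ nonzero singular values.

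It remains to show $\E\, s_{\max}(W_k)/s_{\min}(W_k)\le C$. This is the main obstacle: it needs an expectation bound, not just a high-probability one, so I need a small-ball estimate for $s_{\min}$. For a tall $n\times d$ Gaussian matrix $A=W_k^\top$ with $d\le n/2+1$, I will use the standard tail bounds. First, $s_{\max}(A)\le \sqrt n+\sqrt d+t$ with probability $1-2e^{-t^2/2}$. Second, the Rudelson--Vershynin / Edelman-type lower tail
\[
\mathbb P\{s_{\min}(A)\le \varepsilon(\sqrt n-\sqrt{d-1})\}\le (C\varepsilon)^{n-d+1}+e^{-cn}.
\]
Here $\sqrt n-\sqrt{d-1}\ge c\sqrt n$ and $n-d+1\ge n/2\ge 1$.

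Then I apply Cauchy--Schwarz:
\[
\E\frac{s_{\max}}{s_{\min}}\le (\E s_{\max}^2)^{1/2}(\E s_{\min}^{-2})^{1/2}.
\]
The first factor is $O(\sqrt n)$. For the second, I integrate the small-ball tail against $\varepsilon^{-3}$. This converges once $n-d+1\ge3$, giving $(\E s_{\min}^{-2})^{1/2}\le C/\sqrt n$.

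Two points need care. The $e^{-cn}$ term does not by itself control the negative moment near $0$. I will avoid this either by using the clean bound $(C\varepsilon)^{n-d+1}$ valid for all $\varepsilon$ when $n-d+1\gtrsim n$, or by using the explicit fact that $\E s_{\min}(A)^{-2}\le \E\operatorname{tr}((A^\top A)^{-1})=d/(n-d-1)$ from the inverse-Wishart mean. The latter is cleaner, and it gives $\E s_{\min}^{-2}\le C/n$ directly once $n-d\ge2$. The remaining small cases $n\le 5$ are handled by taking $m$ smaller or by using $\norm{Y}_{\psi_2}\le C\sqrt n\le C'$ trivially. Combining these bounds yields $\E\norm{Y}_{\psi_2\mid W}\le C$.
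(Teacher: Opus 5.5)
\textbf{There is a genuine gap in the expectation step.} Your reduction is the paper's. You split $W$ into two row blocks of about $n/2$ rows, use the triangle inequality, and apply Corollary~\ref{thm:bounded_spectral} to each block with $\E[Y^{(k)}\mid W]=0$. The problem is the last step, where you aim for $\E\, s_{\max}(W_k)/s_{\min}(W_k)\le C$ via $\E s_{\min}^{-2}\le C/n$.

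The route you call cleaner does not give this. The inverse-Wishart identity gives $\E\operatorname{tr}\big((W_kW_k^\top)^{-1}\big)=d/(n-d-1)$. For $d\approx n/2$ this is of order $1$, not $C/n$. Bounding $\lambda_{\max}$ of the inverse by its trace loses a factor $d$, because all $d$ eigenvalues of $(W_kW_k^\top)^{-1}$ are of order $1/n$. With $\E s_{\min}^{-2}=O(1)$ and $\E s_{\max}^2=O(n)$, Cauchy--Schwarz gives only $\E\sqrt{\kappa}=O(\sqrt n)$. That is no better than the trivial bound.

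Your other option needs a small-ball bound $\mathbb P\{s_{\min}\le \varepsilon c\sqrt n\}\le (C\varepsilon)^{n-d+1}$ valid for all $\varepsilon$, with no $e^{-cn}$ term. For Gaussian matrices such a bound can be obtained from the exact Wishart eigenvalue distribution. But it is not the Rudelson--Vershynin bound you quoted, which, as you note, does not suffice, and you give no source for it. As written, the final step is unproved.

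\textbf{The missing observation is that you never need $\E\sqrt{\kappa}$.} Since $|\inner{v}{Y^{(k)}}|\le\sqrt d\le\sqrt n$, you have deterministically
\[
\norm{Y^{(k)}}_{\psi_2\mid W}\le\min\bigl\{C\sqrt{\kappa(W_kW_k^\top)},\,c\sqrt n\bigr\},
\]
so a high-probability bound on $\kappa$ is enough. Apply your own tail bounds at a fixed small $\varepsilon_0$ and with $t=\sqrt n$. They give $\kappa(W_kW_k^\top)\le C'$ outside an event of probability at most $e^{-c'n}$, and here the $e^{-cn}$ term in the Rudelson--Vershynin bound is harmless. Hence
\[
\E\norm{Y^{(k)}}_{\psi_2\mid W}\le C\sqrt{C'}+c\sqrt n\,e^{-c'n}\le C''.
\]
This is exactly how the paper concludes: it splits on the event $\{\kappa(\Sigma_1)\le c_2/c_1\}$ and its complement, with no negative moments of $s_{\min}$.
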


\begin{proof}
The case $n=1$ is immediate, so assume $n\ge 2$.
Directly applying Corollary~\ref{thm:bounded_spectral} to $W$ fails because
$\Sigma=WW^\top$ typically has large condition number.

To fix this, we split $W$ into two row blocks
$W_1\in\R^{m\times n}$ and $W_2\in\R^{(n-m)\times n}$, where
$m=\lfloor n/2\rfloor$:
\begin{equation}\label{equ:partition_trick}
W=\begin{pmatrix}W_1\\W_2\end{pmatrix}.
\end{equation}
This induces the split
\[
Y^{(1)}=\sgn(W_1x),
\qquad
Y^{(2)}=\sgn(W_2x).
\]
The triangle inequality gives
\[
\norm{Y}_{\psi_2\mid W}
\le
\norm{Y^{(1)}}_{\psi_2\mid W_1}
+\norm{Y^{(2)}}_{\psi_2\mid W_2}.
\]
Both blocks have at most $2n/3$ rows. The same argument applies to both, so
we treat $W_1$.

Since $m/n\le 1/2$, the rectangular block $W_1$ is well conditioned with
high probability. Specifically, its covariance
$\Sigma_1=W_1W_1^\top$ satisfies
\[
nc_1I\preceq\Sigma_1\preceq nc_2I
\]
with probability at least $1-c_3e^{-c_4n}$ for universal positive constants
$c_1,c_2,c_3,c_4$; see
\cite[Corollary~7.3.2 and Exercise~7.13]{Vershynin_2026}. Let
$\mathcal E=\{\kappa(\Sigma_1)\le c_2/c_1\}$.

Since $W_1x$ is centered Gaussian,
$\E[Y^{(1)}\mid W_1]=0$. On $\mathcal E$,
Corollary~\ref{thm:bounded_spectral} gives
\begin{equation}\label{equ:Y_1_bound}
\norm{Y^{(1)}}_{\psi_2\mid W_1}
\le C_0\sqrt{\kappa(\Sigma_1)}
\le C_0\sqrt{\frac{c_2}{c_1}}
\eqqcolon C_1.
\end{equation}
On $\mathcal E^c$, the deterministic bound
$|\langle v,Y^{(1)}\rangle|\le\sqrt m\le\sqrt n$ gives
$\norm{Y^{(1)}}_{\psi_2\mid W_1}\le c_5\sqrt n$ for an absolute constant
$c_5>0$. Therefore
\[
\E\norm{Y^{(1)}}_{\psi_2\mid W_1}
\le C_1+c_5c_3\sqrt n\,e^{-c_4n}
\le C_2.
\]
Here $C_2>0$ is an absolute constant.
The same estimate for $W_2$ and the triangle inequality complete the proof.
\end{proof}

\begin{remark}[Role of the AI system]
Gemini initially returned the covariance-splitting argument for the sign
function and extended it to bounded coordinate maps. We later learned that
the sign case and the same proof strategy appeared in Barber and
Kolar~\cite[Lemma~4.5]{MR3852657}; Gemini did not identify that source. The
bounded-differences formulation above follows the same argument. The
row-partitioning step in \eqref{equ:partition_trick} was added by the authors.
\end{remark}
\vspace{-0.5cm}
\section*{Acknowledgments}
The authors thank Simone Bombari for posing the question that motivated this
work and for pointing out its connection with the bounded-differences
inequality. We thank Valentin Roth for suggesting that we state our argument
in bounded-differences form and for sharing his simultaneous and independent work on
bounded-differences inequalities under dependence. Google Gemini 3.5 Flash
was used during the initial exploration, as described above. A reproduced
interactive dialogue and derivation transcripts are archived at
\url{https://gemini.google.com/share/cefbb71da6d2}. The authors are supported
by NSF Grant DMS 2451011 and U.S. Air Force Grant FA9550-25-1-0294.
\vspace{-0.5cm}
\bibliographystyle{alpha}

\end{document}